\documentclass[11pt]{article}
\usepackage{amsfonts,amsmath,amsthm,latexsym,color,epsfig}
\usepackage{here}
\usepackage[T1]{fontenc}   
\usepackage{cleveref}

\theoremstyle{plain}

\newtheorem{theorem}{Theorem}
      \newtheorem{lemma}[theorem]{Lemma}
      \newtheorem{claim}[theorem]{Claim}

\theoremstyle{definition}

\newtheorem{theoremA}{Theorem}

\input{epsf}

\def\F{\mathcal{F}}

\def\RR{\mathbb{R}}
\def\qed{\ifvmode\mbox{ }\else\unskip\fi\hskip 1em plus 10fill$\Box$}

\long\def\ignore#1{}

\def\rc{\advance\leftskip by 0pt plus 40em\rightskip=\leftskip
  \parfillskip=0pt \spaceskip=.3333em \xspaceskip=.5em
  \pretolerance=9999 \tolerance=9999 \hyphenpenalty=9999
  \exhyphenpenalty=9999}

\normalsize
\newcommand{\card}[1]{\lvert #1\rvert}

\def\bd{\mbox{\rm bd}}

\long\def\comment#1{}

\begin{document}

\title{A graded version of the separator theorem for planar graphs }
\author{Nabil H. Mustafa
\thanks{Universit\'e Sorbonne Paris Nord, Paris, France. Email: \tt{mustafa@lipn.univ-paris13.fr.}}
\and
J\'anos Pach
\thanks{R\'enyi Institute, Budapest, Hungary.
    Email: {\tt pach@cims.nyu.edu}. Supported by ERC Advanced Grant ``GeoScape.''}}

\date{}

\maketitle

\begin{abstract}
We prove that every planar graph on $n$ vertices has a vertex ordering $v_1,v_2,\ldots,v_n$ in which each prefix $\{v_1,\ldots,v_i\}$ is joined to at most $C\sqrt{i}$ elements of $\{v_{i+1},\ldots,v_n\}$,
where $C$ is an absolute constant. Consequently, every prefix can be separated
from its complement by deleting $O\left(\sqrt{i}\right)$ vertices lying entirely in its
complement.
\end{abstract}

\section{Introduction}

Given a graph $G$ with vertex set $V(G)$ and a subset $X\subseteq V(G)$, define the \emph{boundary} of $X$ in $G$ as
\[
  \bd_G X
  =\big\{v\in V(G)\setminus X \colon uv\in E(G)\text{ for some }u\in X\big\}.
\]
A \emph{prefix} of an ordering $v_1,\ldots,v_n$ is a set $P_i=\{v_1,\ldots,v_i\}$ for some $i \in [n]$; we also set $P_0:=\emptyset$.

Let $G[X]$ denote the subgraph of $G$ induced by $X$.

Our main result is the following.

\begin{theorem}\label{thm:main}
There is an absolute constant $C$ such that every planar graph $G$ on $n$
vertices has a vertex ordering $v_1,\ldots,v_n$ satisfying
\[
  \card{\bd_G \{v_1,\ldots,v_i\}}\le C\sqrt{i}\qquad \mbox{ for every } i \in [n].
\]
In particular, for every $i \in [n]$, there is a set
$S_i\subseteq\{v_{i+1},\ldots,v_n\}$ of size at most $C\sqrt{i}$ such that
there is no edge of $G$ between $P_i=\{v_1,\ldots,v_i\}$ and $V(G)\setminus(P_i\cup S_i)$.

Furthermore, this is asymptotically optimal in the following sense: for every $n$ there exists a planar graph $G$ on $n$ vertices such that for every ordering of $V(G)$ and every $1 \leq i \leq n/2$, $\card{ \bd_G \{v_1, \ldots, v_i\} } = \Omega \left( \sqrt{i} \right)$.
\end{theorem}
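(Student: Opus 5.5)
The ordering will come from a recursive separator decomposition, arranged so that each prefix is a union of pieces of geometrically growing sizes, with boundary costs summing like a geometric series. Throughout I use the Lipton--Tarjan separator theorem in its weighted and \emph{boundary-aware} form. If $H$ is a planar graph and $R\subseteq V(H)$ is a set of marked ``boundary'' vertices, then one can find $O(\sqrt{|V(H)|})$ vertices whose removal splits $H$ into parts, each containing at most $2/3$ of the vertices and at most $2/3$ of the marked vertices. This follows by applying the separator theorem twice, once with uniform weights and once with weights on $R$, and combining the pieces.

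\textbf{Step 1: an $r$-division-style hierarchy.} Build a recursive decomposition tree of $G$. Each node is a subgraph (a region) $H$ whose boundary, meaning the vertices of $H$ adjacent to vertices outside $H$, has size $O(\sqrt{|V(H)|})$. This is Frederickson's $r$-division argument. Split $H$ by the boundary-aware separator, and add the separator vertices to the boundary of each child. Alternate between splitting by size and splitting by boundary count. This keeps the invariant $|\partial H|\le c\sqrt{|V(H)|}$, because sizes shrink geometrically while the $O(\sqrt{\cdot})$ additions also form a geometric series. Separator vertices get assigned to one child, so the children partition $V(H)$ and every vertex lies in exactly one leaf.

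\textbf{Step 2: the ordering.} Order $V(G)$ by a depth-first traversal of the tree, listing leaf vertices in DFS order. A prefix $P_i$ is then the union of some completed subtrees together with a partial prefix inside the current leaf. Walking down the root-to-leaf path, at each depth we take the complete left siblings. The key point is to bound $\bd_G P_i$ by $\sum_H |\partial H|$ over the maximal completed regions $H\subseteq P_i$, since edges leaving $H$ must go through $\partial H$ or land in its outer neighborhood. This sum is $O(\sum_H \sqrt{|H|})$. The main obstacle appears here: a completed sibling at a high level of the tree can be large, but that is fine, because it lies inside $P_i$, so $|H|\le i$. The risk is the number of such pieces. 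To control it, make the tree have bounded branching, say binary, with piece sizes decaying geometrically along any path. The completed pieces in $P_i$ then have sizes $|H_1|\ge |H_2|\ge\cdots$, decreasing geometrically with depth, each at most $i$. For a constant-degree tree with geometrically shrinking sizes there are $O(1)$ pieces per depth, and their sizes are $\le i\cdot (2/3)^{k}$ after the first level where a completed piece appears. So $\sum\sqrt{|H|}=O(\sqrt i)$.

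One subtlety is that levels above the first completed piece contribute nothing, since there are no completed pieces there. However, the partial ancestor regions themselves are large, and their boundaries are not relevant. A boundary vertex of $P_i$ is adjacent to some completed piece, and lies outside it. So the bound is exactly $\sum |N(H)\setminus H|$. Because I define the boundary of a region to include outside neighbors, this is what the invariant controls. Within the final leaf, take leaves of size $O(1)$, so the partial leaf costs $O(1)$ via its degree bound. Using bounded-size leaves needs degree control, so I would first triangulate and handle high-degree vertices by placing separators of $O(\sqrt{\cdot})$ vertices themselves as singleton pieces. Alternatively, I would count neighbors through the boundary of the enclosing region at the next depth. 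The final statement about $S_i$ follows by taking $S_i=\bd_G P_i$.

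\textbf{Lower bound.} Take the $\sqrt n\times\sqrt n$ grid (padded to $n$ vertices). By the vertex-isoperimetric inequality for grids, every set of $i\le n/2$ vertices has at least $c\sqrt i$ outside neighbors, for any ordering.
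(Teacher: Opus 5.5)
Your estimate depends on the invariant that every region $H$ of the hierarchy satisfies $\card{N(H)\setminus H}=\card{\bd_G H}\le c\sqrt{\card{H}}$. This is what makes $\card{\bd_G P_i}\le\sum_H\card{N(H)\setminus H}$, summed over the maximal completed regions, equal to $O(\sqrt i)$. The invariant is false for planar graphs of unbounded degree, and Frederickson's argument does not supply it. His splits control the number of separator vertices a region shares with other regions, which is what ``add the separator vertices to the boundary of each child'' tracks. That bounds the outer neighbourhood only up to a factor of the maximum degree. In a vertex partition, the child receiving a separator vertex $z$ inherits all of $z$'s neighbours in its sibling as outer boundary.

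Take the wheel, a hub $h$ joined to every vertex of a cycle. For large $n$, every separator as in Theorem~\ref{thm:LTsep} contains $h$, and every $X\ni h$ has $\bd_G X=V(G)\setminus X$. If separators go to the earlier child, the first completed piece already has boundary $\Omega(n)$; in fact $h$ must occupy one of the last $O(\sqrt n)$ positions of any valid ordering. If separators go to the later child, that child violates the invariant, and you lose control below it. For example, take a wheel with hub $z$ plus a grid attached to $z$ by one edge, separated by $Z=\{z\}$. Then $z$ is sent right, and at the next split it may land in a left grandchild $H'$ with $\card{N(H')\setminus H'}=\Omega(n)$, even though all these neighbours are already in the prefix. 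So $\sum_H\card{N(H)\setminus H}$ is the wrong quantity. Triangulating does not help, since it does not bound degrees, and neither do singleton separator pieces, since a high-degree singleton has a huge outer neighbourhood.

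The missing idea is to break the symmetry between the two sides of a split, which is what the paper does. Lemma~\ref{lem:shrink} finds, inside a set $U$ with $\card{\bd_G U}\le K\sqrt{\card U}$, just \emph{one} subset $W$ with $\frac{3}{10}\card U\le\card W\le\frac34\card U$ and $\card{\bd_G W}\le K\sqrt{\card W}$. This is degree-independent: $W$ lies on one side $L$ of a separator of $G[U\cup\bd_G U]$, so it avoids the separator, and $L$ contains at most half of $\bd_G U$. The remainder $S=U\setminus W$, whose outer boundary may be huge, is never required to satisfy the invariant. It is ordered by the non-graded Lemma~\ref{lem:uniform} applied to $G[S]$. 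For a prefix $P=W\cup Q$, the paper uses $\bd_G P\subseteq\bd_G U\cup\bd_G W\cup\bd_{G[S]}Q$ together with $\card P\ge\card W\ge\frac{3}{10}\card U$ to turn $O(\sqrt{\card U})$ into $O(\sqrt{\card P})$. Only one nested chain of cores needs controlled outer boundary, and no series is summed.

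Your closing remark about counting neighbours through the boundary of the enclosing region is the germ of this idea. But it must replace the all-regions invariant, not supplement it. A DFS order with separator vertices always sent to the later child might be rescued by a different accounting, charging $\bd_G P_i$ to Frederickson-style shared boundaries of completed regions rather than to their outer neighbourhoods. That would need a hierarchy balanced in both size and shared-boundary count at every node, and it is not the argument you gave. Your lower bound via the grid's vertex-isoperimetric inequality is fine.
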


We have long sought, without success, to derive this theorem from one of the many ingenious geometric and graph-theoretic proofs of the Lipton--Tarjan planar graph separator
theorem~\cite{LT79}; see, e.g., \cite{AST90, AST94, HP11, MTTV97, U51}. Finally, we realized that \Cref{thm:main} can be proved using the planar graph separator theorem as a black box.

A standard version of this theorem can be stated as follows.
\begin{theoremA} \label{thm:LTsep}
There is an absolute constant $c\ge1$ such that every
planar graph $G$ on $n\ge1$ vertices admits a partition  $V(G)=A\mathbin{\dot\cup}Z\mathbin{\dot\cup}B$  satisfying
\begin{equation}\label{eq:separator}
  \card A,\card B\le\frac{2n}{3},
  \qquad
  \card Z\le c\sqrt n,
\end{equation}
and with no edge between $A$ and $B$.
\end{theoremA}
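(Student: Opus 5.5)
Since Theorem~A is the classical Lipton--Tarjan theorem, the paper will use it as a black box. If I had to prove it, I would follow the original breadth-first-search argument. There are three steps: reduce to the connected case, cut the BFS layering at two thin levels, and find a fundamental-cycle separator in the bounded-depth middle part. Throughout I would allow nonnegative vertex weights summing to $1$, with the goal that each side gets weight at most $2/3$. Uniform weights $1/n$ give the statement.

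\emph{Reduction to the connected case.} If some component has weight more than $2/3$, I apply the connected case to it. The remaining components go into whichever side stays within $2/3$: the small side of the separator has weight at most $1/3$ plus the outside weight, and this can be balanced greedily. If every component has weight at most $2/3$, I add components one at a time to $A$ until its weight first exceeds $1/3$, and put the rest into $B$ with $Z=\emptyset$. Then $A$ has weight in $(1/3,2/3]$: before the last component was added it had weight at most $1/3$, and the last component has weight at most... in the bad case where this sum exceeds $2/3$, the last component alone has weight more than $1/3$, so I use it alone as $A$ and put everything else into $B$.

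\emph{Layers and the contracted graph.} For connected $G$, take a BFS tree from a root $r$, with levels $L_0,L_1,\ldots,L_t$, and let $\mu$ be the level containing the weighted median. Since $\sum_j\card{L_j}=n$, I choose levels $\ell_0\le\mu\le\ell_2$ with
\[
\card{L_{\ell_0}}+(\mu-\ell_0)\le\sqrt n \quad\text{and}\quad \card{L_{\ell_2}}+(\ell_2-\mu)\le\sqrt n .
\]
Such levels exist by averaging over the at most $\sqrt n$ candidate levels on each side; empty levels outside $[0,t]$ are allowed. Removing $L_{\ell_0}\cup L_{\ell_2}$ costs at most $2\sqrt n$ vertices. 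The part above $\ell_0$ and the part below $\ell_2$ each have weight at most $1/2$, so only the middle part $M$ (levels strictly between $\ell_0$ and $\ell_2$) can be too heavy. I would then delete everything below $\ell_2$ and contract everything up to level $\ell_0$ into one vertex, which keeps the graph planar. The resulting graph has a spanning tree of depth at most $\ell_2-\ell_0\le 2\sqrt n$, and I triangulate it, since adding edges can only make separation harder.

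\emph{Fundamental cycle lemma (main obstacle).} In a triangulated plane graph with a spanning tree $T$ of depth $h$ and vertex weights at most $2/3$ each, some non-tree edge $e$ has a fundamental cycle with at most $2h+1$ vertices whose interior and exterior each weigh at most $2/3$. I would prove this by the standard walk. Start from any non-tree edge, and while the interior is too heavy, look at the triangle inside the cycle incident to $e$. Replace $e$ by one of the triangle's other edges, possibly after a tree edge is absorbed into the cycle. Each step strictly shrinks the interior, and the weight moved onto the cycle is controlled, so the walk ends at a balanced cycle. The case analysis on how the triangle's third vertex and edges sit relative to $T$ is where the care goes. Taking the cycle vertices (minus the contracted vertex) together with $L_{\ell_0}\cup L_{\ell_2}$ as $Z$ gives $\card Z\le 4\sqrt n+1=O(\sqrt n)$. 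Finally I distribute the top part, the bottom part and the two sides of the cycle into $A$ and $B$ as in the component reduction, which keeps both at most $2n/3$.
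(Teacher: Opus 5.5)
The paper gives no proof of this statement. It quotes it from Lipton and Tarjan~\cite{LT79} and uses it only as a black box. Your outline (BFS levels, two thin levels around the median level, contracting the top part, and a fundamental-cycle separator in a triangulation) is their original argument, so the route is sound. Two details need repair.

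First, the level selection with right-hand side exactly $\sqrt n$ is false when $n$ is not a perfect square. For $n=8$ with BFS level sizes $1,2,3,2$, the median level is $\mu=2$, and every $\ell_0\le 2$ gives $\card{L_{\ell_0}}+(\mu-\ell_0)=3>\sqrt 8$. Use $t=\lceil\sqrt n\rceil$ instead. If $\ell_0=\mu-j$ failed for every $j=1,\dots,t$, then levels $0,\dots,\mu-1$ would contain at least $\sum_{j=1}^{t}(t-j+1)=t(t+1)/2>n/2$ vertices, contradicting the choice of $\mu$; the same argument works for $\ell_2$. This only changes the constant.

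Second, in the cycle walk the quantity to control is not the weight moved onto the cycle, since that ends up in $Z$ anyway. What matters is the weight pushed to the exterior. Suppose neither $xy$ nor $yz$ is an edge of the current cycle and both are non-tree edges. Then the interior, of weight $I>2/3$, splits into three parts: the interiors of the two new fundamental cycles, and the tree path from $y$ to the old cycle. You must switch to the edge whose fundamental cycle has the heavier interior. The new exterior then consists of the old exterior, some old cycle vertices, and the lighter part, so it weighs at most $(1-I)+I/2<2/3$. In the remaining cases the exterior gains at most one vertex of the old cycle, so it stays at most $1-I<1/3$. The interior loses at least one face per step, so the walk terminates at a cycle with both sides of weight at most $2/3$. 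Incidentally, this argument needs no upper bound on individual vertex weights.
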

Fix such a constant $c$; it will be used throughout our proof.
\smallskip

\noindent  \textbf{Remark 1.} \Cref{thm:main} is a graded   discrete analogue
	of the isoperimetric property of the Euclidean plane.
One can view  the nested
prefixes $P_1\subseteq P_2\subseteq \cdots \subseteq P_n$ as a ball grown
vertex by vertex according to the ordering, and view $\bd_G P_i$ as the
perimeter of $P_i$. In $\RR^2$, for a ball of radius $r$, the
perimeter is $\Theta\left( \sqrt{\mathrm{area}} \right)$. By the classical isoperimetric
inequality, balls minimize perimeter among all regions of a given area.
\Cref{thm:main} provides the matching discrete graded statement: every planar
graph admits an ordering in which every prefix $P_i$ has perimeter
$O(\sqrt{|P_i|})$.

\noindent \textbf{Remark 2.} The single prefix $P_{\lfloor n/2\rfloor}$ given by \Cref{thm:main}
 yields the planar separator theorem, with separator
$\bd_G P_{ \lfloor n/2 \rfloor}$ of size $O\left(\sqrt n\right)$.

\section{Two technical lemmas}

Using the separator theorem, it is easy to prove the following weaker version of Theorem~\ref{thm:main}.

\begin{lemma}\label{lem:uniform}
There is an absolute constant $D$ such that every planar graph $G$ on $n$
vertices has a vertex ordering $v_1,\ldots,v_n$ satisfying
\[
  \card{\bd_G \{v_1,\ldots,v_i\}}\le D\sqrt{n}\qquad \mbox{for every } i \in [n].
\]
Here we can set $D=6c$, where $c$ is the constant from Theorem~\ref{thm:LTsep}.
\end{lemma}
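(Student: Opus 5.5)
We prove Lemma~\ref{lem:uniform} by recursive application of Theorem~\ref{thm:LTsep}: order the vertices as ``$A$ first, then $Z$, then $B$'', ordering $A$ and $B$ recursively. More precisely, we prove by induction on $n$ that every planar graph $G$ on $n$ vertices has an ordering with $\card{\bd_G P_i}\le D\sqrt n$ for all $i$. The base case $n$ small (say $n\le$ some constant) is trivial once $D\ge 1$, since $\card{\bd_G P_i}\le n$. For the inductive step, apply Theorem~\ref{thm:LTsep} to get $V(G)=A\mathbin{\dot\cup}Z\mathbin{\dot\cup}B$ with $\card A,\card B\le 2n/3$, $\card Z\le c\sqrt n$ and no $A$--$B$ edges. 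Since $G[A]$ and $G[B]$ are planar, by induction they have orderings $\sigma_A,\sigma_B$ with prefix boundaries (inside $G[A]$, $G[B]$) at most $D\sqrt{2n/3}$. Output the ordering $\sigma_A$, then $Z$ in any order, then $\sigma_B$.

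We then bound the boundary of each prefix $P$ in $G$ in three cases. If $P$ is a prefix $P'$ of $\sigma_A$, then any outside neighbour of $P$ lies in $A\setminus P'$ (hence in $\bd_{G[A]}P'$) or in $Z$, since there are no $A$--$B$ edges; so $\card{\bd_G P}\le D\sqrt{2n/3}+c\sqrt n$. If $A\subseteq P\subseteq A\cup Z$, then $\bd_G P\subseteq Z$, giving at most $c\sqrt n$. If $P=A\cup Z\cup P''$ with $P''$ a prefix of $\sigma_B$, then outside neighbours lie in $B\setminus P''$ and are adjacent to $P''$ or to $Z$. Those adjacent to $P''$ are in $\bd_{G[B]}P''$, which has size at most $D\sqrt{2n/3}$; those adjacent only to $Z$ need more care, discussed below.

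The main obstacle is the last case: vertices of $B$ adjacent to $Z$ could be numerous, because $Z$ may have high degree. To fix this, I would not recurse on $B$ alone, but keep track of an external boundary. Alternatively, and more simply, I would strengthen the induction. Order $B$ in \emph{reverse}: the tail of the ordering is then a prefix of a good ordering of $B$. Concretely, write the ordering as $\sigma_A$, $Z$, $\mathrm{rev}(\sigma_B)$. For a prefix ending inside $B$, the complement is a prefix $Q$ of $\sigma_B$, and all edges leaving the complement go to $\bd_{G[B]}Q$ or to $Z$. However, we need the neighbours of $P$ outside $P$, i.e.\ vertices of $Q$ adjacent to $P$; these are not directly controlled. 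So instead I would symmetrize: the boundary of $P$ is contained in $V\setminus P$, and every such vertex has a neighbour in $P$. I would therefore bound the edge-based quantity $\min(\card{\bd P},\card{\bd (V\setminus P)})$ and strengthen the inductive hypothesis to two-sided separations: the set of vertices incident to edges between $P_i$ and its complement (on both sides) has size $\le D\sqrt n$. This quantity is symmetric under reversal and subadditive in the three cases above. With this strengthening, the last case gives at most $D\sqrt{2n/3}+c\sqrt n$ as well, since any crossing edge either lies within $G[B]$ or has an endpoint in $Z$, and we count endpoints in $Z$ at most $c\sqrt n$ times.

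Finally, check the constants: we need $D\sqrt{2/3}+c\le D$, i.e.\ $D\ge c/(1-\sqrt{2/3})\approx 5.45c$, so $D=6c$ suffices. The one-sided bound of the lemma follows because $\bd_G P_i$ is contained in this set of crossing-edge endpoints.
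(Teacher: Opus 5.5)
\textbf{There is a genuine gap in your third case, and the proposed fix does not close it.}

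Your strengthened hypothesis requires that the set of all endpoints of edges between $P_i$ and $V(G)\setminus P_i$ has size at most $D\sqrt n$. This is false for planar graphs. Take the star $K_{1,n-1}$ and $i=\lfloor n/2\rfloor$. The crossing edges join the centre to every leaf on the side opposite to it, so at least $\lfloor n/2\rfloor$ vertices are endpoints, whatever the ordering. The induction therefore cannot go through. (The quantity $\min(\card{\bd_G P},\card{\bd_G (V\setminus P)})$ that you mention first would not imply the one-sided bound of the lemma at all.)

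Your verification of the last case also drops exactly the problematic vertices. An edge from $Z$ to the unlisted part of $B$ has its second endpoint in $B$. These endpoints are not controlled by $\card Z\le c\sqrt n$, nor by the recursion on $G[B]$.

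In fact no ordering that lists all of $Z$ before all of $B$ can work, whether you use $\sigma_B$ or $\mathrm{rev}(\sigma_B)$. For the star, Theorem~\ref{thm:LTsep} may legitimately return $Z=\{\text{centre}\}$, with $A$ and $B$ each consisting of half the leaves. Then the prefix $A\cup Z$ has boundary all of $B$, of size about $n/2$.

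\textbf{The missing idea is to put $Z$ last:} list $\sigma_A$, then $\sigma_B$, then $Z$ in any order. Each type of prefix is then controlled:
\begin{itemize}
\item For a prefix $P\subseteq A$, the boundary lies in $Z\cup\bd_{G[A]}P$.
\item For a prefix $A\cup Q$ with $Q$ a prefix of $\sigma_B$, the outside neighbours of $A$ lie in $Z$, since there are no $A$--$B$ edges. The outside neighbours of $Q$ lie in $(B\setminus Q)\cup Z$. So the boundary is contained in $Z\cup\bd_{G[B]}Q$.
\item Once the prefix enters $Z$, its boundary is contained in the unlisted part of $Z$.
\end{itemize}
Every prefix thus has boundary at most $c\sqrt n+D\sqrt{2n/3}\le D\sqrt n$. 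This is exactly your constant computation, with no strengthening of the inductive hypothesis needed, and it is the paper's proof.
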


\begin{proof}
The proof is by induction on $n$. The case $n=0$ being trivial, assume that $n>0$ and that the lemma has already been proved for all graphs with fewer than $n$ vertices.

Let $A$, $B$ and $Z$ be
the three sets obtained by applying Theorem~\ref{thm:LTsep} to $G$. Construct an ordering of $A$ by induction on $G[A]$, then append to it an ordering of $B$ constructed
by induction on $G[B]$, and finally append the  elements of $Z$ in any order.

For a prefix $P\subseteq A$, its boundary in $G$ is contained in
$Z\cup\bd_{G[A]}P$. For a prefix of the form $P=A\cup Q$ with $Q\subseteq B$ being a prefix of $B$, the boundary of $P$ in $G$ is contained in $Z\cup\bd_{G[B]}Q$. For a prefix extending into $Z$, the boundary is contained in the unlisted part of $Z$. Thus, applying the inductive hypothesis, the boundary in $G$ of every prefix with respect to the above ordering is of size at most
\[
|Z| + D \max \left\{ \sqrt{|A|}, \sqrt{|B|} \right\} \leq c\sqrt n+D\sqrt{2n/3}.
\]
The R.H.S. above is at most $D\sqrt n$ as long as $c \leq D\left(1-\sqrt{2/3}\right)$. In particular, one can set $D = 6c$, proving the inductive step.
\end{proof}

We need a second, independent, consequence of the separator theorem. 

\begin{lemma}\label{lem:shrink}
There exist absolute constants $K,M>0$ such that the following is true. For a planar graph $G$ and a set $U\subset V(G)$ with $|U|>M$ and $|\bd_G U|\le K\sqrt{|U|}$,
there exists  a subset $W\subset U$ such that
\begin{equation}\label{eq:shrink}
  \frac{3}{10}|U|\le|W|\le\frac{3}{4}|U|, \qquad \text{and} \qquad |\bd_G W|\le K\sqrt{|W|}.
\end{equation}
Here we can set $K=48c$ and $M=\left\lceil(1020c)^2\right\rceil$, where $c$ is the constant from \Cref{thm:LTsep}.
\end{lemma}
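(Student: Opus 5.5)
The plan is to apply Theorem~\ref{thm:LTsep} once to the graph induced by $U$ together with its boundary, so that the separator also splits $\bd_G U$, and then take $W$ to be the $U$-part of the side that receives fewer boundary vertices. Write $u=|U|$ and $H=G[U\cup\bd_G U]$. Since $|\bd_G U|\le K\sqrt u$ and $u>M$, we have $|V(H)|\le u+K\sqrt u\le (1+\delta)u$ for a small $\delta$; with $K=48c$ and $M\ge(1020c)^2$ one gets $\delta\le 1/20$. Theorem~\ref{thm:LTsep} applied to $H$ gives $V(H)=A\mathbin{\dot\cup}Z\mathbin{\dot\cup}B$ with $|A|,|B|\le\frac23(1+\delta)u$, $|Z|\le c\sqrt{(1+\delta)u}$, and no $A$--$B$ edges.

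For a candidate $W=A\cap U$, the key observation is that $\bd_G W\subseteq (A\cap\bd_G U)\cup Z$. Indeed, a neighbour of $W$ lies in $U\cup\bd_G U=V(H)$. It cannot lie in $B$, and if it lies in $A$ but not in $W$ it must be in $A\setminus U\subseteq \bd_G U$. The same holds for $B\cap U$. The boundary vertices of $U$ not in $Z$ are split between $A$ and $B$, so one side, say $A$, contains at most $\frac12 K\sqrt u$ of them. For that side we get
\[
|\bd_G (A\cap U)|\le \tfrac12 K\sqrt u + c\sqrt{(1+\delta)u}.
\]
If also $|A\cap U|\ge \frac{3}{10}u$, we need $\frac12K+1.03c\le K\sqrt{3/10}\approx 0.547K$, which holds for $K=48c$. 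The upper bound $|A\cap U|\le\frac23(1+\delta)u\le\frac34 u$ is automatic. This settles the \emph{balanced case}, in which both $A\cap U$ and $B\cap U$ have at least $\frac3{10}u$ vertices.

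The main obstacle is the \emph{unbalanced case}: the side with few boundary vertices of $U$ may have fewer than $\frac3{10}u$ vertices of $U$. Since $|A\cap U|+|B\cap U|\ge u-|Z|\ge(1-o(1))u$, the other side then has more than $0.69u$ vertices of $U$ but possibly almost all of $\bd_G U$. For this case I would first try to \emph{merge} rather than choose. Take $W$ to be the large side $B\cap U$ together with pieces of the small side. Alternatively, recurse: apply Theorem~\ref{thm:LTsep} to the large side, whose size is at most $\frac23(1+\delta)u$, and combine the resulting parts with $A\cap U$. The aim is a set of size in $[\frac3{10}u,\frac34u]$ that inherits at most about half of $\bd_G U$ plus $O(c\sqrt u)$ new separator vertices.

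Concretely, I would iterate the separator on the currently largest piece a bounded number of times, each time paying $O(c\sqrt u)$ in separator vertices and using $u>M$ to absorb lower-order terms. This produces a bounded number of pieces $X_1,\dots,X_t$, each of size at most $\frac1{5}u$, with no edges between distinct pieces, and total separator $Z^\ast$ of size $O(c\sqrt u)$. Pieces of size at most $\frac15u$ can be added one at a time while staying inside the window $[\frac3{10}u,\frac34u]$, which is wider than $\frac15u$. I would then choose a union $W$ of pieces in that window containing at most roughly half of the pieces' boundary vertices of $U$, by an averaging or greedy argument over the ordering of the pieces. The bound $\bd_G W\subseteq (\bd_G U\cap \bd_G W)\cup Z^\ast$ then gives $|\bd_G W|\le (\frac12+o(1))K\sqrt u+O(c)\sqrt u$, which the choice $K=48c$ is large enough to dominate.

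The delicate point, which I would check first, is making the ``at most half of $\bd_G U$'' averaging rigorous. A vertex of $\bd_G U$ may be adjacent to several pieces, so it could be counted for each of them. Here I would exploit that the pieces lie in different components after deleting $Z^\ast$ from the graph including $\bd_G U$. Placing $\bd_G U$ inside $H$ before separating, as in the first step, makes each boundary vertex belong to exactly one piece or to $Z^\ast$, so no double counting occurs.
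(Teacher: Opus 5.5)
Your balanced-case argument is the paper's proof: apply Theorem~\ref{thm:LTsep} to $H=G[U\cup\bd_G U]$, take the $U$-part of the side that contains at most half of $\bd_G U$, and use $\bd_G W\subseteq (A\cap\bd_G U)\cup Z$. The gap is that you never show this case always occurs. Instead you call the ``unbalanced case'' the main obstacle and handle it only with a plan: iterated separators, a selection step for ``roughly half'' of $\bd_G U$, and a claim that $K=48c$ absorbs the total separator cost. None of these steps is carried out. In fact the unbalanced case is empty. The missing step is one line, using the balance condition for the \emph{other} side. Write $b=|\bd_G U|\le K\sqrt u$; note $b\le u$ because $\sqrt u>1020c>K$. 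Then
\[
|A\cap U|\ \ge\ u-|Z|-|B|\ \ge\ u-c\sqrt{u+b}-\tfrac23(u+b)\ \ge\ \tfrac u3-\Bigl(\tfrac{2K}{3}+2c\Bigr)\sqrt u\ =\ \tfrac u3-34c\sqrt u\ \ge\ \tfrac{3u}{10}.
\]
The last inequality is exactly $\sqrt u\ge 30\cdot 34\,c=1020c$, which is the choice of $M$. The same bound holds for $B\cap U$. So whichever side contains fewer boundary vertices of $U$ already lies in the window $[\frac3{10}u,\frac34u]$, and your balanced-case estimate finishes the proof.

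What hid this is rounding $b/u$ up to $\delta=1/20$. With that rounding, $\frac23(1+\delta)u=0.7u$, and the complementary bound degenerates to $|A\cap U|\ge 0.3u-|Z|$, just short of $\frac3{10}u$. If you keep $b\le K\sqrt u$ as a lower-order term instead, you get the needed slack.

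Your fallback is also shaky with the stated constants. When $|W|$ is near $\frac3{10}u$, the available slack is only $K(\sqrt{3/10}-\frac12)\sqrt u\approx 2.3c\sqrt u$. A single separator already costs about $1.03c\sqrt u$. So your assertion that $K=48c$ dominates the accumulated cost of several further applications is unjustified, and it would likely force a larger $K$.
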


\begin{proof}
Let $m = |U|$, $B=\bd_G U$ and $b=|B|$. Applying \Cref{thm:LTsep} to the planar graph $H=G[U\cup B]$, we obtain a partition
$V(H)=L\mathbin{\dot\cup}Z\mathbin{\dot\cup}R.$
Swap $L$ and $R$, if necessary, so that
$|B\cap L|\le b/2$, and set $W=L\cap U$.

Since $m>M=\left\lceil(1020c)^2\right\rceil$, we have $\sqrt m>1020c\ge K$, so $b\le K\sqrt m\le m$. Thus
\begin{equation}\label{eq:separator-size}
  \card Z\le c\sqrt{m+b}\le2c\sqrt m.
\end{equation}
Every neighbour of every vertex in $W$ belongs to $B\cup U$. As there is
no edge between $L$ and $R$, every such neighbour outside $W$ lies in
$(B\cap L)\cup Z$. It follows that
\begin{equation}\label{eq:child-boundary}
  \card{\bd_G W}
  \le\frac b2+\card Z
  \le(K/2+2c)\sqrt m
  =26c\sqrt m.
\end{equation}

By \eqref{eq:separator-size}, and noting that $m \geq M$ is equivalent to $m \geq 1020c \sqrt{m}$,
\begin{align*}
  \card W
  &\ge m-\card Z-\card R  \geq m - 2c \sqrt{m} - \frac{2 (m+b)}{3} \geq \frac m3-\left(\frac{2K}{3}+2c\right)\sqrt m     =\frac m3-34c\sqrt m   \ge\frac{3m}{10},\\
  \intertext{and}
  \card W
  &\le\card L
   \le\frac{2(m+b)}{3}
   \le\frac{2m}{3}+32c\sqrt m
   \le\frac{3m}{4}.
\end{align*}
Finally, by \eqref{eq:child-boundary},
\[
  \card{\bd_G W} \leq 26c\sqrt m
  \le48c\sqrt{3m/10}
  \le K\sqrt{\card W}.
\]
as required.
\end{proof}

\section{Proof of Theorem~\ref{thm:main}}

Throughout this section, $D,K,M$ denote the same constants as in the above lemmas, and we set
\[  C=\max\left\{(K+D)\sqrt M,\,
                (2K+D)\sqrt{10/3}\right\}.
\]

We prove the following stronger statement, which then immediately implies \Cref{thm:main}.

\begin{claim}~\label{claim}
Let $G$ be a planar graph. For any subset $U\subseteq V(G)$ with
$\card{\bd_G U}\le K\sqrt{\card U},$ there is an ordering of $U$ such that
every prefix $P$ satisfies  $\card{\bd_G P}\le C\sqrt{\card P}.$
\end{claim}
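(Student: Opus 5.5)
We argue by strong induction on $m=\card U$. When $m\le M$, we order $U$ by applying \Cref{lem:uniform} to the planar graph $G[U]$. A prefix $P$ then has
\[
\bd_G P\subseteq \bd_{G[U]}P\cup \bd_G U,
\]
so
\[
\card{\bd_G P}\le D\sqrt m+K\sqrt m\le (K+D)\sqrt M .
\]
Since $\card P\ge1$, this is at most $C\sqrt{\card P}$, which is where the first term in the definition of $C$ enters.

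When $m>M$, we apply \Cref{lem:shrink} to $U$ and obtain $W\subset U$ with $\frac{3}{10}m\le\card W\le\frac34 m$ and $\card{\bd_G W}\le K\sqrt{\card W}$. Because $\card W<m$, the induction hypothesis gives an ordering of $W$ in which every prefix $P$ satisfies $\card{\bd_G P}\le C\sqrt{\card P}$. We output this ordering of $W$ and follow it with an ordering of $U\setminus W$. That second ordering comes from \Cref{lem:uniform} applied to $G[U\setminus W]$.

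The remaining task is prefixes of the form $P=W\cup Q$, where $Q$ is a nonempty prefix of the \Cref{lem:uniform} ordering of $U\setminus W$. Every neighbour of $P$ outside $P$ lies in one of three places: in $\bd_G W$, in $\bd_G U$, or in $U\setminus W$ adjacent to $Q$ (that is, in $\bd_{G[U\setminus W]}Q$). Hence
\[
\card{\bd_G P}\le K\sqrt{\card W}+K\sqrt m+D\sqrt{m-\card W}\le (2K+D)\sqrt m .
\]
Since $\card P\ge\card W\ge\frac{3}{10}m$, we have $\sqrt m\le\sqrt{10/3}\,\sqrt{\card P}$. The bound is therefore at most $(2K+D)\sqrt{10/3}\,\sqrt{\card P}\le C\sqrt{\card P}$, which explains the second term in $C$.

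The main point to get right is the order of the two pieces. We must put $W$ first, so that the induction handles the small prefixes and the crude bound of \Cref{lem:uniform} is used only once prefixes already have size $\Theta(m)$. The lower bound $\card W\ge\frac{3}{10}m$ from \Cref{lem:shrink} is exactly what makes this affordable. The upper bound $\card W\le\frac34 m$ guarantees $W\ne U$, so the induction applies. Applying the claim with $U=V(G)$, where $\bd_G U=\emptyset$, yields \Cref{thm:main}.
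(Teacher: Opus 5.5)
Your proposal is correct and follows the paper's proof essentially step for step. It uses the same base case via \Cref{lem:uniform} on $G[U]$, giving the $(K+D)\sqrt M$ term, and the same inductive step: the \Cref{lem:shrink} set $W$ goes first, followed by a \Cref{lem:uniform} ordering of $U\setminus W$, with the same three-part containment of $\bd_G(W\cup Q)$ yielding the $(2K+D)\sqrt{10/3}$ term.
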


Note that $\bd_G P$ may contain many vertices outside $U$.

\begin{proof} By induction on $m=|U|$.

Suppose first that $1\le m\le M$. Apply Lemma~\ref{lem:uniform} to $G[U]$.
For every prefix $P$ in the resulting ordering, we have
\[
  \bd_G P\subseteq\bd_G U\cup\bd_{G[U]}P.
\]
Thus, for every nonempty $P$,
\[
  \card{\bd_G P}
  \le(K+D)\sqrt m
  \le(K+D)\sqrt M
  \le C\sqrt{\card P}.
\]
This establishes the base case.

Now let $m>M$. Choose $W$ by Lemma~\ref{lem:shrink}, and set $S=U\setminus W$.
Since $\card W<m$ and, by \eqref{eq:shrink}, $|\bd_G W|\le K\sqrt{|W|}$, we can use the induction hypothesis to conclude that $W$ has an ordering that meets the requirements of Claim~\ref{claim}. Append to it an
ordering of $S$, obtained by applying Lemma~\ref{lem:uniform} to the graph $G[S]$.

A nonempty prefix $P$ entirely contained in $W$ satisfies $\card{\bd_G P}\le C\sqrt{\card P},$ by
induction. Any remaining prefix has the form $P=W\cup Q$, where $Q$ is a
prefix of the ordering of $S$. Then we have
\begin{equation}\label{eq:boundary-union}
  \bd_G P
  \subseteq
  \bd_G U\;\cup\;(\bd_G W\setminus P)\;\cup\;\bd_{G[S]}Q.
\end{equation}
Indeed, a neighbour of $P$ outside $P$ either lies outside $U$, or lies in
$S\setminus Q$ and is adjacent to a vertex of $W$ or of $Q$, respectively.
Therefore, we have
\begin{align*}
  \card{\bd_G P}
  &\le K\sqrt m+K\sqrt{\card W}+D\sqrt{\card S}\\
  &\le(2K+D)\sqrt m\\
  &\le(2K+D)\sqrt{10/3}\sqrt{\card P}
   \le C\sqrt{\card P},
\end{align*}
using $\card P\ge\card W\ge3m/10$. This proves the claim.
\end{proof}

In the special case $U=V(G)$, we have $|\bd_G U|=0$, and Claim~\ref{claim} gives the upper bound of
\Cref{thm:main}.

The second part of \Cref{thm:main} follows by taking $G$ to be the $\sqrt{n} \times \sqrt{n}$ grid. For simplicity we assume
that  $\sqrt{n}$ is an integer; a similar argument can be made to work for the general case. It remains to show that any $S \subset V(G)$, $|S| \leq n/2$, has $\card{ \bd_G S } = \Omega \left( \sqrt{|S|} \right)$.

Let $S$ be a subset of the $\sqrt{n} \times \sqrt{n}$ grid, with $i = |S| \le n/2$.
Let $t$ be the maximum number of points of $S$ in any row.
Note that since $i \le n/2$, we have $\frac{i}{\sqrt n} \le \frac{\sqrt i}{\sqrt 2}$, and hence
$$\sqrt{i} - \frac{i}{\sqrt{n}} \ge \left(1-\frac{1}{\sqrt2}\right)\sqrt{i} = \Omega(\sqrt{i}).$$
Now we prove that the number of boundary edges
is $\Omega \left( \sqrt{i} \right)$, 
via these  two cases.
\begin{description}
	\item[$t \geq \sqrt{i}$.] Let $R \subseteq S$ be the points of $S$ in a row with $|R| \geq \sqrt{i}$, and consider the $|R|$ columns containing the points of $R$. The number of these columns in which all $\sqrt{n}$ points belong to $S$ is at most $i/\sqrt{n}$. Thus at least
	$$|R| - \frac{i}{\sqrt{n}} \geq \sqrt{i}  - \frac{i}{\sqrt{n}} = \Omega \left( \sqrt{i} \right) $$
	such columns are non-full, and thus at least one vertical boundary edge lies in each of these columns, giving the required bound.
	\item[$t < \sqrt{i}$.] Then there are at least $\sqrt{i}$ rows of the grid containing some point of $S$. Out of these rows, at least
	$$\sqrt{i} - \frac{i}{\sqrt{n}} = \Omega \left( \sqrt{i} \right) $$
	are non-full, and so each of these contains at least one horizontal boundary edge.
\end{description}
Since each boundary edge is counted at its endpoint outside $S$, and each boundary vertex has degree at most $4$, we get  $|\bd_G S|=\Omega(\sqrt{i})$.

\section{Concluding remarks}

There are several useful separator theorems for hereditary families of graphs. A family $\F$ of graphs is called \emph{hereditary} if for every graph $G\in\F$, every induced subgraph of $G$ also belongs to $\F$. The family of planar graphs is obviously hereditary. So is the family of \emph{string graphs}, that is, the family of graphs whose vertices can be represented by continuous arcs (``strings'') in the plane such that two curves have nonempty intersection if and only if the corresponding vertices are connected by an edge. 

Improving some weaker results \cite{FP10, FP14, Ma14}, Lee~\cite{Lee17} published a proof of a conjecture of Fox and Pach~\cite{FP10}, according to which every string graphs with $m$ edges admits a balanced vertex separator of size $O(\sqrt{m})$. Recently, a flaw has been discovered in Lee's argument, and a correct proof has been supplied by Davies \emph{et al.}~\cite{DRSW26}. Using their result as a black box, one can prove the following statement analogous to Theorem~\ref{thm:main}.

\begin{theorem}\label{thm:string}
There is an absolute constant $C$ such that every string graph $G$ has
a vertex ordering $v_1,\ldots,v_n$ with the following property. Let
$P_i=\{v_1,\ldots,v_i\}$ and let $m_i$ denote the number of edges in the subgraph of $G$ induced by $P_i$.  

Then, for every $1\le i\le n$,
the prefix $P_i$ can be separated from $V(G)\setminus P_i$ by
deleting at most $C\sqrt{m_i+1}$ vertices belonging to $P_i$.
\end{theorem}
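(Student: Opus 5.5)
We will mirror the proof of Theorem~\ref{thm:main}, with the balanced separator theorem for string graphs of Davies \emph{et al.}~\cite{DRSW26} as the black box. That result gives an absolute constant $c'$ such that every string graph with $m$ edges has a partition $V=A\mathbin{\dot\cup}Z\mathbin{\dot\cup}B$ with $|A|,|B|\le 2n/3$, $|Z|\le c'\sqrt{m}$, and no edge between $A$ and $B$. Two points differ from the planar case.

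First, the measure is now edges rather than vertices. An isolated vertex or an edgeless piece costs nothing, which is why the bound is $\sqrt{m_i+1}$.

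Second, the separating set must now lie \emph{inside} $P_i$. So we work with the inner boundary $\partial^- X=\{u\in X: u \text{ has a neighbour outside } X\}$. We require $|\partial^- P_i|\le C\sqrt{m_i+1}$. Deleting $\partial^-P_i$ then separates $P_i\setminus\partial^-P_i$ from the rest.

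\emph{Step 1 (uniform lemma).} Prove the analogue of Lemma~\ref{lem:uniform}: every string graph with $m$ edges has an ordering with $|\partial^-P|\le D\sqrt{m+1}$ for all prefixes. We order $Z$ first, then $A$ recursively, then $B$ recursively.
\begin{itemize}
\item For a prefix $Z'\subseteq Z$ we get $|\partial^-|\le|Z|$.
\item For a prefix $Z\cup P$ with $P\subseteq A$, the inner boundary lies in $Z\cup\partial^-_{G[A]}P$. Vertices of $P$ have no neighbours in $B$.
\item For a prefix $Z\cup A\cup Q$ the same argument applies with $B$ in place of $A$.
\end{itemize}
The recursion needs $m_A,m_B$ to shrink geometrically, but vertex balance alone does not give this. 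We fix it by separating with respect to edge weight: apply the separator to an auxiliary string graph, or use the weighted version in which vertices get weight equal to their degree. This should be available from~\cite{DRSW26} or derivable by the standard iteration. Then $|Z|+D\sqrt{(2/3)(m+1)}\le D\sqrt{m+1}$ for $D$ large.

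\emph{Step 2 (shrink lemma).} Prove the analogue of Lemma~\ref{lem:shrink}. Given $U$ with $|\partial^- U|\le K\sqrt{e(U)+1}$ and $e(U)>M$, find $W\subset U$ with $e(W)$ between fixed fractions of $e(U)$ and $|\partial^-W|\le K\sqrt{e(W)+1}$. To do this, separate $G[U]$ with edge weights so that $\partial^-U$ is split evenly. Take $W=L\cup$ (the part of $Z$ adjacent to $L$), chosen so that $\partial^- W\subseteq (\partial^-U\cap L)\cup Z$.

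\emph{Step 3 (claim and conclusion).} Run the induction of Claim~\ref{claim}. Order $W$ inductively, then order $S=U\setminus W$ by Step~1. For a prefix $W\cup Q$, the inner boundary lies in $\partial^-U\cup\partial^-W\cup\partial^-_{G[S]}Q$, and $m_P\ge e(W)=\Omega(e(U))$. Taking $U=V(G)$ gives the theorem.

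The main obstacle is the edge-count bookkeeping in Steps~1 and~2. The difficulty is that $e(W)$ can be much smaller than $|W|$, and the removal of $Z$ can destroy many edges, so lower-bounding $e(W)$ requires the separator to be balanced with respect to edges and not just vertices. I would first try to obtain an edge-weighted balanced separator from~\cite{DRSW26} by iterating the unweighted theorem on the heavier side. The usual geometric-series argument should keep the total separator size $O(\sqrt m)$.
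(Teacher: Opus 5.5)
The paper omits this proof, saying only that the argument for Theorem~\ref{thm:main} carries over with the string-graph separator as a black box. Your outline follows that template, and several pieces are right: the reformulation via the inner boundary $\partial^-$, listing $Z$ first in Step~1, absorbing separator vertices into $W$, and the three-term inclusion in Step~3.

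Step~1 works once you have a separator balanced in edges, but not via your ``geometric series''. Iterating the vertex-balanced theorem on the heavy piece shrinks vertex counts, but the black-box cost $c'\sqrt{e(\cdot)}$ need not shrink. That gives only $O(\sqrt m\log n)$. The auxiliary-graph route does work. For example, discard isolated vertices, then give every edge $uv$ some $k$ tiny new strings meeting only $u$ and $v$.

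The genuine gap is Step~2. It carries the whole difficulty, and you assert its conclusion without proof. Your estimate is $|\partial^-W|\le\frac12|\partial^-U|+|Z|\le(\frac K2+c')\sqrt{e(U)+1}$. So you need $e(W)\ge\beta\,e(U)$ with $\beta>\frac14$; this is why Lemma~\ref{lem:shrink} uses $\sqrt{3/10}>\frac12$. This must hold for whichever side the halving rule selects, and you also need $W\subsetneq U$ for the induction to progress. Your tools give neither, for three reasons.

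\emph{Degree weights do not control $e(W)$.} Degree-weighted balance only gives $2e(R)+e(R,Z)\le\frac43e(U)$, which says nothing about how many edges $W$ keeps. Take $U$ a star with centre $z$ and leaves $\ell_1,\dots,\ell_k$, with $\partial^-U=\{\ell_2\}$. Then $Z=\{z\}$, $L=\{\ell_1\}$, $R=\{\ell_2,\dots,\ell_k\}$ is a legal degree-balanced separation. The halving rule picks $L$, and $W=\{\ell_1,z\}$ has $e(W)=1$.

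\emph{Taking only $N(L)\cap Z$ into $W$ loses edges.} It discards $E(G[Z])$, which can be a constant fraction of $e(U)$, e.g.\ a clique on $\Theta(\sqrt{e(U)})$ vertices.

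\emph{Dense $U$ gives nothing.} When $G[U]$ is dense the separator theorem is vacuous: $Z$ may be all of $U$, and no proper $W$ results.

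One way to close the gap is as follows.
\begin{enumerate}
\item Put the vertices isolated in $G[U]$ into $S$; they contribute nothing to $\partial^-_{G[S]}$.
\item Take $W=X\cup Z$ with \emph{all} of $Z$. Here $X\in\{L,R\}$ is chosen by the halving rule, preferring an empty side.
\item Balance the sets $E_L,E_R$ of edges \emph{incident} to each side by applying the vertex-balanced theorem to the auxiliary graph above. This gives $|E_L|,|E_R|\le(\frac23+O(1/k))e(U)+O(\sqrt{e(U)})$.
\item Hence $e(W)=e(U)-|E_Y|\ge0.3\,e(U)$ for the opposite side $Y$, once $k$ and $e(U)$ are large. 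Also $W\ne U$ whenever $L\cup R\ne\emptyset$.
\item If $L=R=\emptyset$, the non-isolated part $U'$ has $O(\sqrt{e(U)})$ vertices. Then every $W\subseteq U'$ has $|\partial^-W|\le|W|$, so you may take $W=U'$, or $U'$ minus a minimum-degree vertex if $U'=U$.
\end{enumerate}
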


Writing $\sqrt{m_i+1}$ instead of $\sqrt{m_i}$ is merely a technicality, to handle the trivial cases where $m_i=0$.

Note that every planar graph is string graph, and the number of edges induced by an $i$-element subset of the vertex set of a planar graph is $O(i)$. In particular, in an ordering of the vertex set of a planar graphs, guaranteed by Theorem~\ref{thm:string}, every prefix $P_i$ can be separated from its complement by deleting $O(\sqrt{i})$ vertices of $G$. This is almost the same as Theorem~\ref{thm:main}.

It is tempting to believe that, as in Theorem~\ref{thm:main}, here we can also guarantee the existence of an ordering such that 
\[
  \card{\bd_G \{v_1,\ldots,v_i\}}\le O(\sqrt{m_i})\qquad(\mbox{for every } 1\le i\le n).
\]
However, this is not the case. To see this, notice that for the complete graph $K_n$, we have $m_i={i\choose 2}$ and $\sqrt{m_i}<i$. If we are not allowed to delete any vertex of a prefix $P_i$, then to separate $P_i$ from its complement, we need to delete all $n-i$ remaining vertices.
\medskip

\noindent\textbf{Acknowledgement.} This note was written by humans for humans while the second author was visiting Universit\`e Sorbonne Paris Nord in September 2026. The details of the proof of Theorem~\ref{thm:string}, omitted here, were worked out by ChatGPT.

\end{document}